\documentclass{amsart}
\usepackage{amssymb,amsxtra}
\usepackage{graphicx}
\usepackage{amscd}
\usepackage{amsmath}
\usepackage{amsfonts}
\usepackage{amssymb}
\setlength{\topmargin}{-.25in}
\setlength{\oddsidemargin}{0in}
\setlength{\evensidemargin}{0in}
\setlength{\textheight}{8.7in}
\setlength{\textwidth}{6.5in}

\newtheorem{theorem}{Theorem}[section]
\newtheorem{proposition}[theorem]{Proposition}

\newtheorem{corollary}[theorem]{Corollary}
\theoremstyle{definition}

\newtheorem{remark}[theorem]{Remark}

\title[$C^*$-algebras and Fell bundles associated to a textile system]{$C^*$-algebras and Fell bundles associated to a textile system}
\author{Valentin Deaconu}
\address{Department of Mathematics\\ University of Nevada\\ Reno NV
89557-0084, USA}
\email[Valentin Deaconu]{vdeaconu@unr.edu}

\thanks
{Research partially supported by a UNR JFR Grant}

\subjclass{Primary 46L05; Secondary 46L55.}
\keywords{textile system, shift of finite type, graph C*-algebra, Fell bundle}
\date{\today}
\begin{document}

\begin{abstract}
The notion of  textile system was introduced by M. Nasu in order to analyze  endomorphisms and automorphisms of 
 topological Markov shifts. A textile system is given by two finite directed graphs $G$ and $H$ and two  morphisms $p,q:G\to H$, with some extra properties. It turns out that a textile system determines
a first quadrant two-dimensional shift of finite type, via a collection of Wang tiles, and conversely, any such shift is  conjugate to a textile shift. In the case the morphisms $p$ and $q$  have the path lifting property, we prove that they induce  groupoid morphisms $\pi, \rho:\Gamma(G)\to \Gamma(H)$ between the corresponding \'etale groupoids of $G$ and $H$. 

We define two families ${\mathcal A}(m,n)$ and $\bar{\mathcal A}(m,n)$ of $C^*$-algebras associated to a textile shift, and compute them in specific cases. These are graph algebras, associated to some one-dimensional shifts of finite type constructed from the textile shift. Under extra hypotheses, we also define two  families of  Fell bundles  which encode the complexity of these two-dimensional shifts.
We consider several classes of examples of textile shifts,  including the full shift, the Golden Mean shift and shifts associated to  rank two graphs.
\end{abstract}

\maketitle

\section{Introduction}

\medskip

In dynamics, the time evolution of a physical system is often modeled by the iterates of a single transformation. However, multiple symmetries of some systems lead to the study of the join action of several commuting transformations, where new and deep phenomena occur. 

The classical shift of finite type from symbolic dynamics was studied with powerful tools from linear algebra and matrix theory. The number of period $n$ points, the zeta function and the entropy can all be simply expressed in terms of the $k\times k$ transition matrix $A$. The Bowen-Franks group $BF(A)={\mathbb Z}^k/(I-A){\mathbb Z}^k$ is invariant under flow equivalence, and it was recovered in the K-theory of the Cuntz-Krieger algebra ${\mathcal O}_A$ generated by partial isometries $s_1,s_2,...,s_k$ such that 
\[1=\sum_{i=1}^k s_is_i^*, \;\; s_j^*s_j=\sum_{i=1}^kA(j,i)s_is_i^*\;\;\text{for}\;\; 1\le j\le k.\]
The  algebra ${\mathcal O}_A$ is simple and purely infinite if and only if $A$ is transitive (for every $i,j$ there exists $m$ such that $A^m(i,j)\neq 0$), and $A$ is not a permutation matrix. These $C^*$-algebras can also be understood  as graph algebras, which were  studied and generalized  by several authors, see \cite{R}.

The higher dimensional analogue of a shift of finite type consists in all $d$-dimensional arrays of symbols  from a finite alphabet subject to a finite number of local rules. Such arrays can be shifted in each of the $d$ coordinate directions, giving $d$ commuting transformations. There are also $d$ transition matrices, which in general do not commute. There are deep distinctions between the case $d=1$ and $d\ge 2$: for example, it is easy to describe the space of such arrays in the first case, but there is no general algorithm which will decide, given the set of local rules, whether or not the space of such arrays is empty in the second case.

  Although the general theory of multi-dimensional shifts of finite type is still in a rudimentary stage, there are particular classes where significant progress was made, and where graphs and matrices play a useful role. These include  the class of algebraic subshifts, see \cite {S1, S2}  and  the class of two-dimesional shifts associated to  {\em textile systems}, or to {\em Wang tilings}. For these classes, some of the conjugacy invariants, like entropy (the growth rate of the number of patterns one can see in a square of side $n$), the number of periodic points and the zeta functions were computed. 
   
In the literature, there are some papers relating higher dimensional shifts of finite type and $C^*$-algebras. For example, the particular case of shifts  associated to rank  $d$ graphs was studied  by A. Kumjian, D. Pask and others.
In this case, the translations in the coordinate directions are local homeomorphisms, and there is a canonical \'etale groupoid and a $C^*$-algebra associated to such a graph, which is Morita equivalent to a crossed product of an AF-algebra by the group ${\mathbb Z}^d$. Under some mild conditions, the groupoid is essentially free and the $C^*$-algebra is simple and purely infinite. For more details, see \cite{KP}.
Also, in \cite{PRW1} and \cite {PRW2}, the authors analyze the $C^*$-algebra of rank two graphs whose infinite path spaces are Markov subgroups of $({\mathbb Z}/n{\mathbb Z})^{{\mathbb N}^2}$, like the Ledrappier example, see also \cite{KS} and \cite{LS}. In all these examples, the entropy is zero. The connections between higher dimensional subshifts of finite type and operator algebras remains to be  explored further, and we think that this is a fascinating subject.

In this paper, in an attempt to apply results from operator algebra to  arbitrary two-dimensional shifts of finite type supported in the first quadrant, we construct two families of $C^*$-algebras, defined using some one-dimensional shifts associated to a textile shift as in \cite{MP2}.  The K-theory groups of these algebras provide invariants of the two-dimensional shift. We also construct  groupoid morphisms and families of Fell bundles  associated to some particular textile systems. We consider several examples of textile shifts, related to rank two graphs, to the full shift, to the Golden Mean transition matrices and to cellular automata.

{\bf Acknowledgements}. The author wants to express his gratitude to Alex Kumjian, David Pask and Aidan Sims for helpful discussions.


\section{Textile systems and two-dimensional shifts of finite type}

\medskip

Throughout this paper, we consider  finite directed graphs $G=(G^1,G^0)$, where $G^1$ is the set of edges,  $G^0$ is the set of vertices, and $s,r:G^1\to G^0$ are the source and range maps, which are assumed to be onto.

\definition A textile system (see \cite{N}) is a quadruple $T=(G,H,p,q)$, where $G=(G^1,G^0)$, $H=(H^1,H^0)$ are two finite directed graphs, and $p,q:G\rightarrow H$ are two surjective graph morphisms  such that  $(p(e),q(e),r(e),s(e))\in H^1\times H^1 \times G^0\times G^0$ uniquely determines $e\in G^1$.  We have the following commutative diagram:

\[\begin{array}{ccccc}H^0&\stackrel{p}{\leftarrow}&G^0 &\stackrel{q}{\rightarrow} &H^0\\\uparrow\! r &{}&\uparrow\! r&{}&
\uparrow\! r\\H^1&\stackrel{p}{\leftarrow}&G^1&\stackrel{q}{\rightarrow}&H^1\\
\downarrow\! s&{}&\downarrow\! s&{}&\downarrow\! s\\
H^0&\stackrel{p}{\leftarrow} &G^0 &\stackrel{q}{\rightarrow} &H^0\end{array}\]

The dual textile system $\bar{T}=(\bar{G}, \bar{H}, s,r)$ is obtained by interchanging the pairs of maps $(p,q)$ and $(s,r)$. The new graphs $\bar{G}=(G^1,H^1)$ and $\bar{H}=(G^0,H^0)$ have source and range maps given by $p$ and $q$, and $s,r$ are now graph morphisms. Note that, even if the initial graphs $G$ and $H$ have no sinks, the new graphs $\bar{G}$ and $\bar{H}$ may have sinks (vertices $v$ such that $s^{-1}(v)=\emptyset$).

A first quadrant textile weaved by a textile system $T$ is a two-dimensional array $(e(i,j))\in (G^1)^{{\mathbb N}^2}$, such that  $r(e(i,j-1))=s(e(i,j))$ and such that $q(e(i-1,j))=p(e(i,j))$ for all $i,j\in {\mathbb  N}$. It is clear that  $(e(i,j)) _{j\in {\mathbb N}}\in G^{\infty}$ (the infinite path space of $G$) for all $i\in {\mathbb N}$. In some cases, the set of such arrays may  be empty (see Example 3.1 in \cite{A}).

\remark A textile system associates to each edge  $e\in G^1$ a square called
{\em Wang tile} with bottom edge $s(e)$, top edge $r(e)$, left edge $p(e)$,
and right edge $q(e)$: 
\[\begin{array}{ccc}{}&r(e)&{}\end{array}\]\[\begin{array}{ccc}p(e){}&\begin{tabular}{|c|}\hline e\\ \hline\end{tabular} &{}q(e).\end{array}\]\[
\begin{array}{ccc}{}&s(e)&{}\end{array}\]
If we let $X=X(T)$ to be the set of all textiles
weaved by $T$, then $X$ is a closed, shift invariant subset of
$(G^1)^{{\mathbb N}^2}$, and we obtain a two-dimensional
shift of finite type, defined below. 
Alternatively, if we use Wang tiles, we get a tiling of the first quadrant.
We will describe in Proposition \ref{stot} the connection between two-dimensional shifts of finite type and textile systems.

\definition Let $S$ be a finite alphabet of cardinality $|S|$. The full $d$-dimensional shift with alphabet $S$ is the dynamical system $(S^{{\mathbb N}^d}, \sigma)$, where  \[\sigma^m (x)(n)=x(n+m), \; x\in X,  \; n,m\in {\mathbb N}^d.\]
A subset $X\subset S^{{\mathbb N}^d}$ which is closed in the product topology and which is $\sigma$-invariant 
 is called a   {\em $d$-dimensional shift of finite type} or a  {\em Markov shift} if
there exists a finite set (window) $F\subset {\mathbb N}^d$ and a set of {\em admissible patterns} 
$P\subset S^F$
such  that \[X=X[P]=\{x\in S^{{\mathbb N}^d} \; \mid \;(\sigma^mx)\mid_F\in P\; 
\mbox{for every
}\; m\in {\mathbb N}^d\}.\]
Many times $F=\{(0,0,...,0),(1,0,...,0),(0,1,...,0),...,(0,0,...,1)\}$. A shift of finite type has $d$ transition matrices of dimension $|S|$ with entries in $\{0,1\}$, which in general do not commute. 

\definition\label{conj} Let $S_1$ and $S_2$ be alphabets,  let $F\subset{\mathbb N}^d$ be a finite subset, and let $\Phi:S_1^F\to S_2$ be a map. A sliding block code defined by $\Phi$ is the map \[\phi:S_1^{{\mathbb N}^d}\to S_2^{{\mathbb N}^d}, \phi(x)_n=\Phi(x\mid_{F+n}), n\in {\mathbb N}^d.\]
For $d=1$ we recover the notion of cellular automaton. Two shifts of finite type $X[P_1], X[P_2]$ are {\em conjugate} if there is a bijective sliding block code $\phi:X[P_1]\to X[P_2]$. In this case, the dynamical systems $(X[P_1], \sigma)$ and $(X[P_2], \sigma)$ are topologically conjugate (see \cite{LS}).

For $d=2$, any Markov shift can be specified by two transition matrices. Such shifts are investigated by N.G. Markley and M.E. Paul in \cite{MP1}. Two $k\times k$ transition matrices $A$ and $B$  with no identically zero rows or columns are called {\em coherent} if 
\[(AB)(i,j)>0\;\;\text{iff}\;\; (BA)(i,j)>0\;\;\text{and} \;\;(AB^t)(i,j)>0\;\;\text{iff}\;\; (B^tA)(i,j)>0,\] where $B^t$ is the transpose. If $A$ and $B$ are coherent, it is proved that 
\[X(A,B)=\{x\in S^{{\mathbb N}^2}: A(x(i,j),x(i+1,j))=1\;\; \text{and}\;\; B(x(i,j),x(i,j+1))=1\;\; \text {for all}\;\; (i,j)\in {\mathbb N}^2\}\]
becomes a two-dimensional shift of finite type, where $S=\{0,1,2,...,k-1\}$.

For more about multi-dimensional shifts of finite type, we refer to \cite{S1, S2} and \cite{L, LS}.
We illustrate now with some examples of textile systems and their associated two-dimensional shifts.

\example\label{ex1} 

Let $G^1=\{a,b\}, G^0=\{u,v\}$ with $s(a)=u=r(b), s(b)=v=r(a),$ and let  $H^1=\{x\}, H^0=\{w\}$ with $p(a)=p(b)=x=q(a)=q(b).$ 

\begin{figure}[htbp] 
   \centering
   \includegraphics[width=4in]{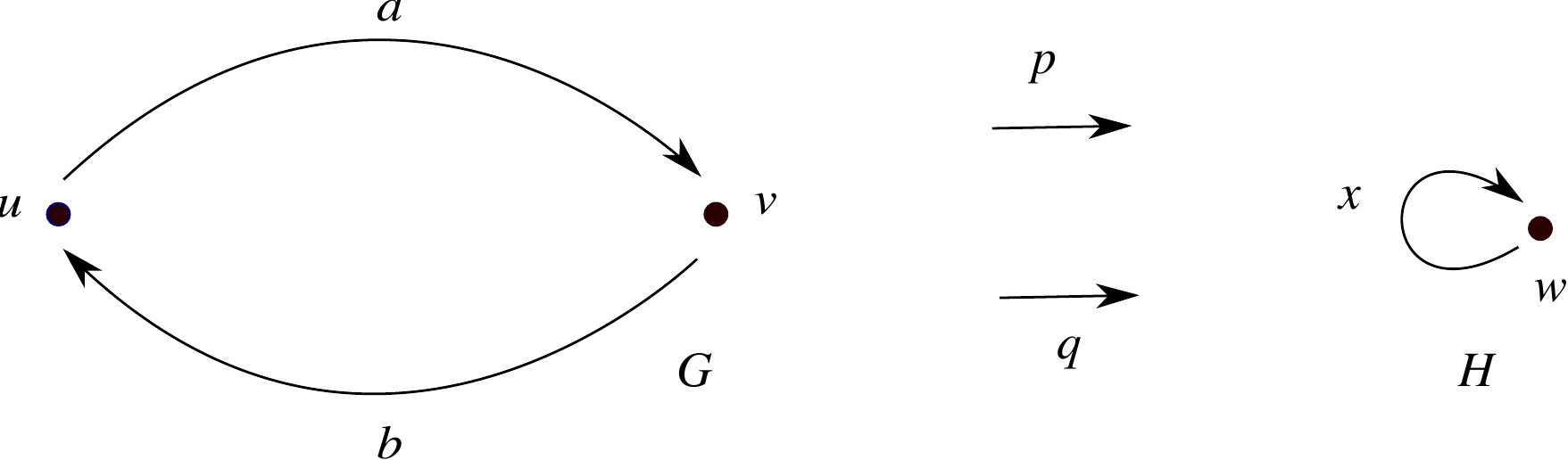} 
   \caption{}
   \label{fig:example1}
\end{figure}

The corresponding two-dimensional shift has alphabet $S=\{a,b\}$ and transition matrices
\[A=\left[\begin{array}{cc}1&1\\1&1\end{array}\right],\; B=\left[\begin{array}{cc}0&1\\1&0\end{array}\right].\]
We will see later that this shift is a particular case of a cellular automaton, obtained from the automorphism of the Bernoulli shift $(\{a,b\}^{\mathbb N}, \sigma)$ which interchanges $a$ and $b$. It also corresponds  to a rank two graph, because the transition matrices commute and the unique factorization property is satisfied (see \cite {KP} section 6).

\example \label{ex2}Let $G^1=\{a,b,c\}, G^0=\{u\}, H^1=\{e,f\}, H^0=\{v\}$ with $p(a)=p(b)=e, p(c)=f, q(a)=f, q(b)=q(c)=e$. 

\begin{figure}[htbp] 
   \centering
   \includegraphics[width=4in]{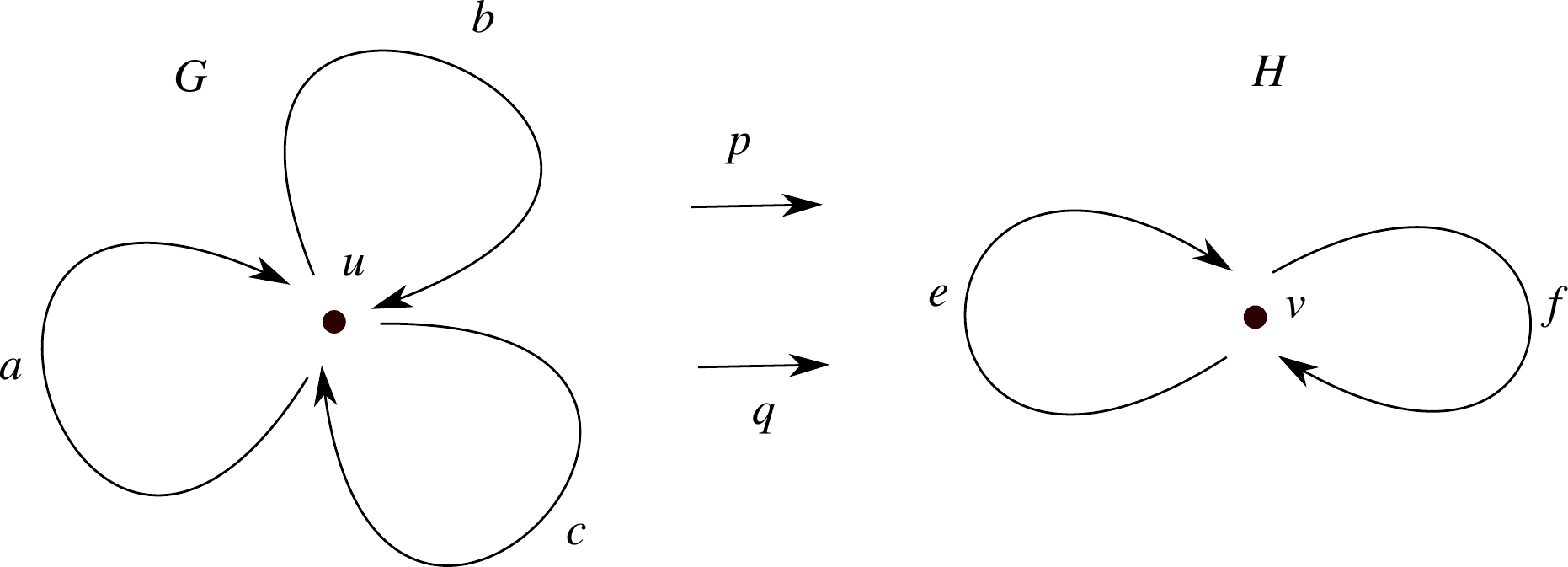} 
   \caption{}
   \label{fig:example}
\end{figure}

Then the corresponding two-dimensional shift of finite type has alphabet $\{a,b,c\}$  and transition matrices
\[A=\left[\begin{array}{ccc}0&0&1\\1&1&0\\1&1&0\end{array}\right],\;\; B=\left[\begin{array}{ccc}1&1&1\\1&1&1\\1&1&1\end{array}\right].\]
Note that $A$ and $B$ are coherent in the sense of Markley and Paul, but do not commute, so this shift is not associated to a rank two graph.

\example \label{ex3} Let $G^1=\{a,b,c\}, G^0=\{u,v\}, s(a)=s(b)=r(c)=r(b)=u, r(a)=s(c)=v, H^1=\{e\}, H^0=\{w\}, p(a)=p(b)=p(c)=q(a)=q(b)=q(c)=e$.

\begin{figure}[htbp] 
   \centering
   \includegraphics[width=4in]{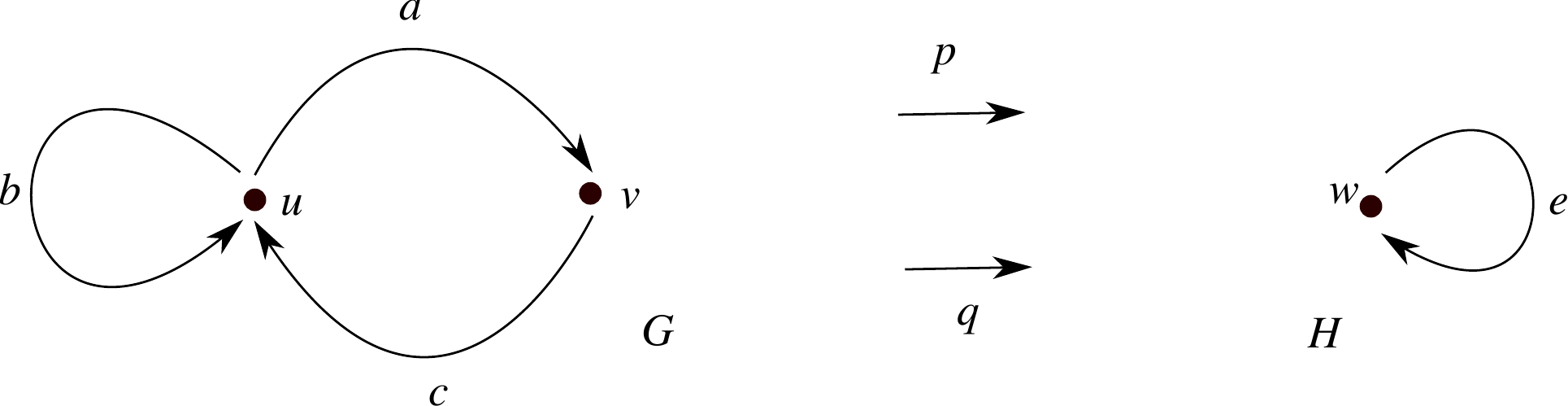} 
   \caption{}
   \label{fig:example}
\end{figure}

This textile system is isomorphic to the dual of the previous one.
The corresponding two-dimensional shift of finite type has the same alphabet, but the transition matrices are interchanged.

\medskip

\section{Textile systems associated to a two-dimensional shift of finite type}

\medskip
From a two-dimensional shift of finite type $X$ we will construct a double sequence of textile systems $T(m,n)$, considering higher  block presentations of $X$ such that $X$ and the shift determined by $T(m,n)$ are conjugated. Recall

\begin{proposition}\label{stot} (see \cite{JM}) Let $(X,\sigma)$ be a two-dimensional shift of finite type with alphabet $S$.  Then, moving to a higher block presentation of $X$ if necessary,  there exists a textile system $T$ such that $X$ is determined by $T$.
\end{proposition}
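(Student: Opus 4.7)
The plan is to mimic the classical one-dimensional trick of passing to an edge shift, but executed in two dimensions so the data organize naturally into a Wang tile set. Start with $X=X[P]$ given by a window $F\subset{\mathbb N}^2$ and an admissible pattern set $P\subset S^F$. By enlarging $F$ to a rectangle $[0,m]\times[0,n]$ and then passing to the $(m{+}1)\times(n{+}1)$ higher block presentation (the standard sliding block code sending $x\in X$ to the array whose $(i,j)$-entry is $x|_{[i,i+m]\times[j,j+n]}$), one obtains a conjugate shift whose window is exactly $\{0,1\}\times\{0,1\}$ and whose constraints are of nearest-neighbor type: two adjacent admissible blocks must agree on their overlap. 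So without loss of generality I assume $F=\{0,1\}\times\{0,1\}$ and $X$ is determined by a set $P\subset S^F$ of admissible $2\times2$ blocks.

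Next I build the textile system $T=(G,H,p,q)$. Let $G^1:=P$, written as $e=\bigl({}^{a\ b}_{c\ d}\bigr)$ with top row $(a,b)$ and bottom row $(c,d)$. Take $G^0$ to be the set of admissible horizontal dominoes $(a,b)\in S^2$ (those appearing as a row of some block in $P$), and set $s(e)=(c,d)$, $r(e)=(a,b)$. Let $H^0:=S$ and take $H^1$ to be the set of admissible vertical dominoes $(a,c)\in S^2$ (appearing as a column of some block in $P$). Define $p,q$ on edges by $p(e)=(a,c)$ (left column) and $q(e)=(b,d)$ (right column), and on vertices by $p((a,b))=a$, $q((a,b))=b$; the bottom and top of a vertical domino giving $s,r$ on $H^1$. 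A direct check shows $p,q$ are surjective graph morphisms commuting with source and range, and the quadruple $(p(e),q(e),r(e),s(e))=((a,c),(b,d),(a,b),(c,d))$ clearly determines all four entries $a,b,c,d$, hence $e$, so the uniqueness axiom of a textile system holds.

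Finally I identify textiles of $T$ with points of the higher-block shift. A textile $(e(i,j))\in(G^1)^{{\mathbb N}^2}$ is by definition a $2{\times}2$-block array satisfying $r(e(i,j-1))=s(e(i,j))$ (top row of lower tile equals bottom row of upper tile) and $q(e(i-1,j))=p(e(i,j))$ (right column of left tile equals left column of right tile). These are exactly the compatibility relations making the $e(i,j)$ the $2{\times}2$-windows of a well-defined element of $S^{{\mathbb N}^2}$ all of whose $2{\times}2$-subblocks lie in $P$, so the map $(e(i,j))\mapsto x$ with $x(i,j)$ equal to the top-left entry of $e(i,j)$ is a sliding block code giving a conjugacy between $X(T)$ and the higher block presentation of $X$, hence with $X$ itself.

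The main obstacle, and the only place care is required, is the reduction to a $2{\times}2$ window: one must argue that after a single higher block recoding all local rules can be rewritten as pure nearest-neighbor overlap conditions, so that the bottom and left edges of a Wang tile truly encode everything needed to glue it to its neighbors. Once this standard fact is in hand, the verification of the textile axioms and of the conjugacy is routine bookkeeping.
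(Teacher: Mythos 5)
Your proposal is correct and follows essentially the same construction as the paper: edges of $G$ are the admissible $2\times 2$ blocks, vertices of $G$ are their rows, $H$ has vertex set $S$ and edge set the admissible columns, and $p,q$ extract the left and right columns. You are more explicit about the preliminary higher block recoding to a $2\times 2$ window and about verifying the textile axioms and the conjugacy, but these are exactly the details the paper leaves implicit in its one-line verification.
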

\begin{proof} Consider ${\mathcal B}={\mathcal B}(2,2)$ the set of $2\times 2$ admissible blocks $\displaystyle \beta=\begin{array}{cc}a&b\\c&d\end{array}$ in $X$, and construct a graph $G$ with $G^0$ labeled by the rows of the blocks in ${\mathcal B}$, $G^1={\mathcal B},\; s(\beta)= c\;\; d$, $r(\beta)=a\;\; b$, and a  graph $H$ with  $H^0=S$ and  $H^1$ labeled by the columns of the blocks in ${\mathcal B}$. Define graph morphisms $p,q:G\rightarrow H$ by $\displaystyle p(\beta)=\begin{array}{c}a\\c\end{array}$, $\displaystyle q(\beta)=\begin{array}{c}b\\d\end{array}$. It is clear that $T=(G,H,p,q)$ is a textile system such that $X$ is the set of textiles weaved by $T$.
\end{proof}

\begin{corollary}\label{ds} For $m,n \ge 1$, let ${\mathcal B}(m,n)$ denote the set of $m\times n$ admissible blocks in $X$, and for $n\ge 2$ define a graph $G(m,n)$ with $G^0(m,n)={\mathcal B}(m,n-1)$ and $G^1(m,n)={\mathcal B}(m,n)$. For $\beta\in G^1(m,n)$, let  $s(\beta)=$ the lower $m\times (n-1)$ block of $\beta$ and let $r(\beta)=$ the upper $m\times (n-1)$ block of $\beta$. Then for $m\ge 2$ there are graph morphisms $p,q:G(m,n)\to G(m-1,n)$ defined by $p(\beta)=$ the left $(m-1)\times n$ block of $\beta$, $q(\beta)=$ the right $(m-1)\times n$ block of $\beta$, where $\beta\in G^1(m,n)$. Then $T(m,n):=(G(m,n), G(m-1,n), p,q)$ for $m,n\ge 2$ are textile systems, and $X$ is determined by $T(m,n)$. The shift $X$ is also determined by the dual textile system  $\bar{T}(m,n):=(\bar{G}(m,n), \bar{G}(m, n-1), s, r)$, where $\bar{G}^1={\mathcal B}(m,n)$, $\bar{G}^0(m,n)={\mathcal B}(m-1,n)$ and the source and range maps are given by $p$ and $q$ as above.
\end{corollary}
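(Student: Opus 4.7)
My plan is to mimic the proof of Proposition \ref{stot} in the general $m,n\ge 2$ setting, reducing everything to four routine verifications: that $G(m,n)$ is a graph, that $p,q$ are graph morphisms, that the uniqueness axiom of Definition 2.1 holds, and that the textiles weaved by $T(m,n)$ are in bijection with configurations in $X$.

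First I would verify that $G(m,n)$ is a well-defined finite directed graph with surjective source and range. Since $X$ has finite alphabet, $G^0(m,n)=\mathcal{B}(m,n-1)$ and $G^1(m,n)=\mathcal{B}(m,n)$ are finite. The lower and upper $m\times (n-1)$ sub-blocks of an admissible $m\times n$ block are admissible, so $s,r$ are well-defined; surjectivity of $s,r$ follows from the extendability of admissible blocks (every $m\times(n-1)$ admissible block can be extended by one row to an $m\times n$ admissible block, since it occurs as a sub-block of some element of $X$). Second, I would check that $p,q:G(m,n)\to G(m-1,n)$ are surjective graph morphisms by the same extendability argument: the left $(m-1)\times n$ sub-block of $\beta\in\mathcal{B}(m,n)$ is in $\mathcal{B}(m-1,n)$, so $p(\beta)$ is an edge of $G(m-1,n)$; the identity $s\circ p=p\circ s$ holds because both sides equal the lower-left $(m-1)\times(n-1)$ sub-block of $\beta$, and analogous identifications give $r\circ p=p\circ r$, $s\circ q=q\circ s$, $r\circ q=q\circ r$.

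Next I would check the uniqueness axiom. For $m,n\ge 2$, every position $(i,j)$ with $1\le i\le m$, $1\le j\le n$ lies in the left $(m-1)\times n$ sub-block (if $i\le m-1$) or the right $(m-1)\times n$ sub-block (if $i\ge 2$), so the pair $(p(\beta),q(\beta))$ already determines every entry of $\beta$, and a fortiori the quadruple $(p(\beta),q(\beta),r(\beta),s(\beta))$ does. Finally, I would prove that $X$ coincides with the set of textiles weaved by $T(m,n)$, via the higher block code $\phi:X\to(\mathcal{B}(m,n))^{\mathbb{N}^2}$ defined by $\phi(x)(i,j)=x|_{[i,i+m-1]\times[j,j+n-1]}$. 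The compatibility conditions $r(\beta(i,j-1))=s(\beta(i,j))$ and $q(\beta(i-1,j))=p(\beta(i,j))$ are exactly the conditions that the entries of adjacent blocks agree on their overlap, so $\phi$ is a bijection onto the textile set $X(T(m,n))$, giving the desired conjugacy.

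The dual statement is obtained by interchanging the roles of the horizontal and vertical directions: in $\bar{T}(m,n)=(\bar{G}(m,n),\bar{G}(m,n-1),s,r)$ the vertices are now parameterized by $\mathcal{B}(m-1,n)$ and $\mathcal{B}(m-1,n-1)$, with $p,q$ serving as source/range and $s,r$ as morphisms; the verifications are entirely symmetric. The main obstacle is not conceptual but notational: keeping straight the four distinct sub-blocks (left, right, lower, upper) and the associated commutation identities, together with the mild subtlety flagged in Definition 2.1 that $\bar{G}(m,n)$ may acquire sinks even though $G(m,n)$ does not, which however does not affect any of the arguments above.
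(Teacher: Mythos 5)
Your proof is correct and follows exactly the route the paper intends: the paper states this corollary without proof, treating it as the direct generalization of the $2\times 2$ block construction in Proposition \ref{stot}, and your verifications (well-definedness, the commuting square $s\circ p=p\circ s$ etc., the uniqueness axiom via the covering of positions by the left and right sub-blocks, and the higher block code giving the conjugacy) are precisely the omitted details. The only caveat you inherit from the paper itself is that the identification of $X$ with the textiles weaved by $T(m,n)$ tacitly assumes $X$ has already been recoded so that it is determined by its $2\times 2$ (hence $m\times n$) admissible blocks, as flagged in Proposition \ref{stot}.
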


We illustrate with some  two-dimensional shifts of finite type and their associated textile systems.  In each case, the 
morphisms $p,q$ are defined as in \ref{stot}.

\example\label{fs}
(The full shift). Let $S=\{0,1\}$ and let $X= S^{{\mathbb N}^2}$. In the corresponding textile system $T=T(2,2)$, the graph $G=G(2,2)$ is the complete graph with $4$ vertices. Indeed,
$\displaystyle G^1=\left\{\begin{array}{cc}a&b\\c&d\end{array}\mid a, b, c, d\in S\right\}$
and 
$G^0=\{0\; 0, 0\; 1, 1\; 0, 1\; 1\}.$ 
The graph $H=G(1,2)$ is the complete graph with $2$ vertices. Indeed,
$\displaystyle H^1=\left\{\begin{array}{c}0\\0\end{array}, \begin{array}{c}1\\0\end{array}, \begin{array}{c}0\\1\end{array}, \begin{array}{c}1\\1\end{array}\right\}$
and  $H^0=\{0,1\}$. 

\example\label{L} (Ledrappier). Let $S={\mathbb Z}/2{\mathbb Z}$,  and let  $X\subset S^{{\mathbb N}^2}$ be the subgroup defined by $x\in X$ iff  \[x(i+1,j)+x(i,j)+x(i,j+1)=0\;\;\text{for all}\;\; (i,j)\in{\mathbb N}^2.\] 
We have $G^0(2,2)=H^1=S\times S$, and $G^1(2,2)$ has 8 elements, corresponding
to the $2\times 2$ matrices $(a(i,j))$ with entries in $S$ such that $a(1,1)+a(2,1)+a(2,2)=0$. The Ledrappier shift is associated  to a rank two graph, and if we  consider the new alphabet 
\[\begin{array}{cc}0&{}\\0&0\end{array},\;\; \begin{array}{cc}1&{}\\0&1\end{array},\;\; \begin{array}{cc}1&{}\\1&0\end{array}, \;\; \begin{array}{cc}0&{}\\1&1\end{array},\]
then  the transition matrices are
\[A=\left[\begin{array}{cccc}1&1&0&0\\0&0&1&1\\1&1&0&0\\0&0&1&1\end{array}\right],\;\; B=\left[\begin{array}{cccc}1&1&0&0\\0&0&1&1\\0&0&1&1\\1&1&0&0\end{array}\right],\]
 see \cite{PRW1}.

\example\label{gm} (Golden Mean). Let  $S=\{0,1\}$,  with transition matrices 
\[A=B= \left[\begin{array}{cc}1&1\\1&0\end{array}\right].\]
Then in the corresponding textile system $T=T(2,2)$, the graphs $G=G(2,2)$ and $H=G(1,2)$ have
\[G^0
=\{0\; 0,0\; 1,1\; 0\},\]
\[G^1=\left\{\begin{array}{cc}0&0\\0&0\end{array},\begin{array}{cc}0&1\\0&0\end{array}, \begin{array}{cc}0&0\\0&1\end{array}, \begin{array}{cc}1&0\\0&0\end{array}, \begin{array}{cc}1&0\\0&1\end{array}, \begin{array}{cc}0&0\\1&0\end{array}, \begin{array}{cc}0&1\\1&0\end{array}\right\},\] \[ H^0=\{0,1\},\;\;H^1=\left\{\begin{array}{c}0\\0\end{array},\begin{array}{c}1\\0\end{array},\begin{array}{c}0\\1\end{array}\right\}.\]

\example\label{ca} (Cellular automata). Let $k\geq 1$ and let $Y\subset \{0,1,...,k-1\}^{\mathbb N}$ be a subshift of finite type. It is known that a
continuous, shift-commuting onto map $\varphi :Y\rightarrow Y$ is given by a sliding block code. Given such a $\varphi$, define a closed,
shift invariant subset \[X=\{(y_m)\in Y^{\mathbb N}\;\mid \; y_{m+1}=\varphi(y_m)\;\mbox{for all}\;
m\in {\mathbb N}\}\subset \{0,1,...,k-1\}^{{\mathbb N}^2}.\]In a natural way,
 $X$ becomes a two-dimensional Markov shift. In the corresponding textile system $T=T(2,2)$, we have $G^0\subset \{0,1,...,k-1\}\times \{0,1,...,k-1\}$, $G^1=$ the set of admissible $2\times 2$ blocks $\begin{array}{cc}a&b\\c&d\end{array}$ with $a,b,c,d\in \{0,1,...,k-1\}$, $H^0=\{0,1,...,k-1\}$, and $H^1=$ the set of admissible columns $\begin{array}{c}a\\c\end{array}$.
 
 For $k=2$, $Y=\{0,1\}^{\mathbb N}$ and $\varphi$ defined by interchanging the letters $0$ and $1$, we recover the textile system from example \ref{ex1}.

Recall that many rank two graphs can be  obtained from two finite graphs $G_1$ and $G_2$ with the same set of vertices  such that the associated vertex matrices commute, and a fixed bijection $\theta: G_1^1*G_2^1\to G_2^1*G_1^1$ such that if $\theta(\alpha,\beta)=(\beta',\alpha')$, then $r(\alpha)=r(\beta')$ and $s(\beta)=s(\alpha')$. Here
\[G_1^1*G_2^1:=\{(\alpha,\beta)\in G_1^1\times G_2^1\mid\; s(\alpha)=r(\beta)\},\]
and $s, r$ are the source and range maps.
This rank two graph is denoted by $G_1*_{\theta}G_2$. The infinite path space is a first quadrant grid with horizontal edges from $G_1$ and vertical edges from $G_2$. Each $1\times 1$ square is uniquely determined by one horizontal edge followed by one vertical edge. 

\begin{proposition}\label{gtot} Any rank two graph of the form $G_1*_{\theta}G_2$ determines a textile system. 
\end{proposition}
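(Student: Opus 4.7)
The plan is to read each commuting square of $\Lambda = G_1 *_\theta G_2$ as a Wang tile. I would set $H := G_2$, so that $H^0 = \Lambda^0$ and $H^1 = G_2^1$, and take $G$ to be the graph with vertex set $G^0 := G_1^1$, edge set $G^1 := G_1^1 * G_2^1$, and source and range defined by $s_G(\alpha,\beta) := \alpha'$ and $r_G(\alpha,\beta) := \alpha$, where $\theta(\alpha,\beta) = (\beta',\alpha')$. Define $p, q : G \to H$ on edges by $p(\alpha,\beta) := \beta$ and $q(\alpha,\beta) := \beta'$, and on vertices by $p(\gamma) := s(\gamma)$ and $q(\gamma) := r(\gamma)$ for $\gamma \in G_1^1$. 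Geometrically each tile has horizontal top edge $\alpha$ and bottom edge $\alpha'$ (both in $G_1^1$), vertical left edge $\beta$ and right edge $\beta'$ (both in $G_2^1$), with corners in $\Lambda^0$.

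Next I would verify the textile-system axioms in order. Checking that $p$ is a graph morphism reduces to the identities $p(s_G(\alpha,\beta)) = s_H(p(\alpha,\beta))$ and $p(r_G(\alpha,\beta)) = r_H(p(\alpha,\beta))$, which unpack to $s(\alpha') = s(\beta)$ and $s(\alpha) = r(\beta)$; the first is exactly the $\theta$-compatibility $s(\beta) = s(\alpha')$, and the second is the defining condition for a composable pair in $G_1^1 * G_2^1$. The analogous check for $q$ uses $r(\alpha) = r(\beta')$ together with $r(\alpha') = s(\beta')$. The unique-determination clause is immediate, since $e = (\alpha,\beta)$ is already recovered from $(r_G(e), p(e)) = (\alpha,\beta)$, hence a fortiori from the quadruple $(p(e), q(e), r(e), s(e))$. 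Surjectivity of $p, q, s_G, r_G$ follows from the standing assumption that the source and range maps of $G_1$ and $G_2$ are onto, combined with the bijectivity of $\theta$: for any $\beta \in G_2^1$ one picks $\alpha \in G_1^1$ with $s(\alpha) = r(\beta)$ to realise $\beta = p(\alpha,\beta)$, and applying $\theta$ and $\theta^{-1}$ handles the remaining cases.

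The only real obstacle is bookkeeping: the compatibility relations $r(\alpha) = r(\beta')$ and $s(\beta) = s(\alpha')$ furnished by $\theta$ must be lined up with the chosen side-of-the-square interpretation of $p, q, s_G, r_G$, so that each of the four squares in the commutative diagram of the textile-system definition closes up correctly. Once the geometric convention above is fixed, every required identity can be read off a single picture of a commuting square, and I would close by observing that the textile system $T = (G, H, p, q)$ so constructed weaves precisely the two-dimensional arrays of commuting squares that constitute the shift of $G_1 *_\theta G_2$.
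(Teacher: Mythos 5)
Your construction is correct and is essentially the same as the paper's: you read each commuting square of $G_1*_\theta G_2$ as a Wang tile, take $G^0$ to be the horizontal edge set, $G^1$ the set of squares, and let $p,q$ pick out the two vertical edges, with the $\theta$-compatibility relations supplying exactly the four identities needed for $p,q$ to be graph morphisms. The only difference is notational — the paper works with the opposite graphs $H_i=G_i^{op}$ and makes the mirror-image assignment of $\alpha,\alpha',\beta,\beta'$ to $s,r,p,q$ — and your version is in fact more complete, since it explicitly checks the unique-determination clause and the surjectivity requirements that the paper leaves implicit.
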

\begin{proof}
Indeed, let $H_i=G_i^{op}$, the graph $G_i$ with the source and range maps interchanged for $i=1,2$. The map $\theta$ induces a unique bijection $H_1^1*H_2^1\to H_2^1*H_1^1$, where
\[H_1^1*H_2^1=\{(\alpha,\beta)\in H_1^1\times H_2^1\mid\; r(\alpha)=s(\beta)\}.\]
We let $G$  with $G^1=H_1^1*H_2^1$ identified with $H_2^1*H_1^1$ by the map $\theta$, $G^0=H_1^1$, and we let $H=H_2$. Define  $s(\alpha,\beta)=\alpha,\;\; r(\alpha,\beta)=\alpha',\;\; p(\alpha,\beta)=\beta'$, and $q(\alpha,\beta)=\beta$, where $\alpha', \beta'$ are uniquely determined by the bijection $\theta(\alpha,\beta)=(\beta',\alpha')$. 
\end{proof}
 
 \remark For a cellular automaton with $\varphi$ as in \ref{ca} defined by an automorphism of a rank one graph $G$, in \cite{FPS} the authors associated a rank two graph whose $C^*$-algebra is a crossed product $C^*(G)\rtimes{\mathbb Z}$, and they computed its K-theory.

\medskip

\section{$C^*$-algebras associated to a two-dimensional shift of finite type}

\medskip

Recall that in Corollary \ref{ds} we constructed a family $T(m,n)=(G(m,n), G(m-1,n),p,q)$ of textile systems from a two-dimensional shift of finite type. This defines a family of graph $C^*$-algebras ${\mathcal A}(m,n):=C^*(G(m,n))$ for $m,n\ge 2$. The dual textile system $\bar{T}(m,n)=(\bar{G}(m,n), \bar{G}(m, n-1), s, r)$ determines another family $\bar{\mathcal A}(m,n):=C^*(\bar{G}(m,n))$, where $\bar{G}(m,n)$ is the graph with source and range maps given by $p$ and $q$, described in Corollary \ref{ds}.

\remark  We have ${\mathcal A}(m,n)\cong {\mathcal A}(m,2)$ for all $n\ge 2$ and $\bar{\mathcal A}(m,n)\cong \bar{\mathcal A}(2,n)$ for all $m\ge 2$. Indeed, the graph $G(m,n)$ is a higher block presentation of $G(m,2)$ and the graph $\bar{G}(m,n)$ is a higher block presentation of $\bar{G}(2,n)$ (see \cite{B}).

For matrix subshifts, we can be more specific. Consider $A,B$ two coherent $k\times k$ transition matrices indexed by $\{0,1,...,k-1\}$ as in \cite{MP2}, and let $X(A,B)$ be the  associated matrix shift. 

\begin{theorem}\label{t1} For a matrix shift  $X(A,B)$ we have  $\bar{\mathcal A}(2,n)\cong{\mathcal O}_{A_n}$  and ${\mathcal A}(n,2)\cong{\mathcal O}_{B_n}$ for $n\ge 2$.  The transition matrices $A_n$ and $B_n$ can be constructed inductively as in \cite{MP2}, and they define two sequences $(Y(A_n))_{n\ge 1}$ and $(Y(B_n))_{n\ge 1}$ of  one-dimensional shifts of finite type associated to $X(A,B)$.   
\end{theorem}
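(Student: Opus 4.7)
The plan is to unpack Corollary \ref{ds} and show that the graphs $\bar{G}(2,n)$ and $G(n,2)$ are, edge for edge, the graphs of the Markley--Paul transition matrices $A_n$ and $B_n$; the isomorphisms with the Cuntz--Krieger algebras then follow from the standard identification $C^*(E)\cong{\mathcal O}_{M_E}$ when $E$ is a finite directed graph with at most one edge between any ordered pair of vertices and $M_E$ has no zero rows.

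First I would describe $\bar{G}(2,n)$ explicitly. Its vertex set is ${\mathcal B}(1,n)$, the admissible columns $C=C_0C_1\cdots C_{n-1}$ of height $n$ (so $B(C_j,C_{j+1})=1$ for all $j$), its edge set is ${\mathcal B}(2,n)$, and the maps $p,q$ send a $2\times n$ block to its left and right columns respectively. A $2\times n$ admissible block is merely the horizontal concatenation of its two columns, so it is uniquely determined by the pair $(p(\beta),q(\beta))$, and such a concatenation is admissible in $X(A,B)$ precisely when $A(C_j,C'_j)=1$ for $0\le j\le n-1$. Hence $\bar{G}(2,n)$ has no multiple edges and its adjacency matrix, indexed by ${\mathcal B}(1,n)$, is
\[A_n(C,C')=\prod_{j=0}^{n-1}A(C_j,C'_j).\]
A direct check shows this agrees with the inductive construction of \cite{MP2}: passing from $A_n$ to $A_{n+1}$ amounts to extending each admissible column by one $B$-compatible symbol and imposing $A$-compatibility on the new row.

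Next I would invoke the standard fact that for a finite directed graph $E$ with at most one edge between any ordered pair of vertices and no sinks, the vertex matrix $M_E$ is a $\{0,1\}$-matrix with no zero rows and $C^*(E)\cong{\mathcal O}_{M_E}$. Coherence of $(A,B)$ together with the nonemptiness of $X(A,B)$ guarantee that every admissible column of height $n$ appears as the left column of some admissible $2\times n$ block (its right neighbor in any $x\in X(A,B)$ realizing it), so $A_n$ has no zero rows. This yields $\bar{\mathcal A}(2,n)=C^*(\bar{G}(2,n))\cong{\mathcal O}_{A_n}$.

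The isomorphism ${\mathcal A}(n,2)\cong{\mathcal O}_{B_n}$ is obtained by the same argument with rows in place of columns: $G(n,2)$ has vertex set ${\mathcal B}(n,1)$ (admissible rows of width $n$), edge set ${\mathcal B}(n,2)$, with source and range given by the lower and upper rows, and adjacency matrix $B_n(R,R')=\prod_{i=0}^{n-1}B(R_i,R'_i)$. The main obstacle is bookkeeping rather than conceptual: one must verify that the inductive definitions of $A_n$ and $B_n$ in \cite{MP2} coincide with the adjacency matrices above, and confirm that coherence of $(A,B)$ is exactly what rules out sinks in $\bar{G}(2,n)$ and $G(n,2)$ so that the Cuntz--Krieger relations make sense.
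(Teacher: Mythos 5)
Your proposal is correct and follows essentially the same route as the paper: both identify $\bar{G}(2,n)$ and $G(n,2)$ with the edge graphs of the Markley--Paul transition matrices $A_n$ and $B_n$ and then read off the Cuntz--Krieger algebras, delegating the inductive description of $A_n$ and $B_n$ (and the fact that local admissibility of adjacent columns suffices, which rests on coherence) to Proposition 2.1 of \cite{MP2}. The only point worth adding is that the paper realizes $Y(A_n)$ and $Y(B_n)$ explicitly as restrictions of $X(A,B)$ to the strips $\{0\le j\le n-1\}$ and $\{0\le i\le n-1\}$, which is needed since the theorem also asserts the existence of these one-dimensional shifts.
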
 

\begin{proof}

Consider the  strip \[K_n= \{(i,j)\in {\mathbb N}^2: 0\le j\le n-1\}\] and the alphabet \[Q_n={\mathcal B}(1,n)=\{\alpha:\alpha\; \text{is a}\; 1\times n\; \text{block occuring in}\; X(A,B)\},\] ordered lexicographically starting at the top. Define $Y(A_n)=\{x\mid_{K_n}: x\in X(A,B)\}$ to be the Markov shift with alphabet $Q_n$ and transition matrix $A_n$, obtained by restricting elements of $X(A,B)$ to the strip $K_n$. The shift $Y(B_n)$ is defined similarly, considering strips \[L_n=\{(i,j)\in {\mathbb N}^2: 0\le i\le n-1\}\] and alphabets \[R_n={\mathcal B}(n,1)=\{\beta:\beta \; \text{is a}\; n\times 1\; \text{block occuring in}\; X(A,B)\}.\]

Clearly, $A_1=A$ and $B_1=B$.  For $n\ge 2$, $A_n$ is a $k_n\times k_n$ matrix, where $k_n$ is the sum of all entries in $B^{n-1}$. Suppose $\alpha$ and $\alpha'$ are $1\times n$ blocks in $X(A,B)$ and $j,j'\in \{0,1,2,...,k-1\}$. Then 
\[A_{n+1}\left(\begin{array}{c} j\\\alpha\end{array}, \begin{array}{c}j'\\\alpha'\end{array}\right)=1\]
if and only if the $1\times (n+1)$ blocks $\begin{array}{c} j\\\alpha\end{array}$ and $\begin{array}{c}j'\\\alpha'\end{array}$ occur in $X(A,B)$ and $A(j,j')A_n(\alpha,\alpha')=1$.  By Proposition 2.1 in \cite{MP2}, the matrix $A_{n+1}$ is the principal submatrix of $A\otimes A_n$ obtained by deleting the $m$th row and column of $A\otimes A_n$ if and only if $B(i,j)=0$, where $m=jk_n+h,\;\; 0\le h< k_n$, and 
\[\sum_{l=0}^{i-1}\sum_{t=0}^{k-1}B^{n-1}(t,l)\le h< \sum_{l=0}^{i}\sum_{t=0}^{k-1}B^{n-1}(t,l).\]
The matrix $B_{n+1}$ is constructed similarly, by deleting rows and columns from $B\otimes B_n$.

\end{proof}

Recall that the dynamical system $(X(A,B), \sigma)$ is (topologically) strong mixing if given any nonempty open sets $U$ and $V$ in $X(A,B)$, there is $N\in{\mathbb N}^2$ such that $\sigma^n(U)\cap V\neq \emptyset$ for all $n\ge N$ (componentwise order).

\begin{corollary} Assume that the transition matrices $A_n, B_n$ are not permutation matrices. Then the $C^*$-algebras $\bar{\mathcal A}(2,n)$ and ${\mathcal A}(n,2)$ are simple and purely infinite if and only if $(X(A,B),\sigma)$ is strong mixing.
\end{corollary}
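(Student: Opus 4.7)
The plan is to combine Theorem~\ref{t1} with the Cuntz--Krieger simplicity criterion recalled in the introduction, reducing the corollary to translating transitivity of the auxiliary matrices $A_n, B_n$ into strong mixing of $X(A,B)$.

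First, Theorem~\ref{t1} gives $\bar{\mathcal{A}}(2,n)\cong \mathcal{O}_{A_n}$ and $\mathcal{A}(n,2)\cong \mathcal{O}_{B_n}$. The Cuntz--Krieger theorem stated in the introduction asserts that $\mathcal{O}_C$ is simple and purely infinite if and only if $C$ is transitive and not a permutation matrix. Since by hypothesis $A_n$ and $B_n$ are not permutations, the problem reduces to showing that $A_n$ and $B_n$ are both transitive if and only if $(X(A,B),\sigma)$ is strong mixing.

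For the forward direction, given $\alpha,\alpha'\in Q_n$ I would take the cylinders $U_\alpha,U_{\alpha'}\subset X(A,B)$ requiring the configuration at column $0$ on rows $0,\dots,n-1$ to be $\alpha$ or $\alpha'$. Strong mixing produces $N\in\mathbb{N}^2$ with $\sigma^{(m,s)}(U_\alpha)\cap U_{\alpha'}\neq\emptyset$ for all $(m,s)\geq N$, which gives an admissible $x\in X(A,B)$ with $\alpha'$ at position $(0,0,\dots,n-1)$ and $\alpha$ at position $(m,s,\dots,s+n-1)$. Since $X(A,B)$ is invariant under vertical shifts and $A,B$ are coherent in the sense of Markley--Paul, one extracts from $x$ an $A_n$-walk of length $m$ from $\alpha'$ to $\alpha$, showing $A_n^m(\alpha',\alpha)>0$. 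The dual argument using vertical strips $L_n$ yields transitivity of $B_n$.

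For the converse, given nonempty open $U,V\subset X(A,B)$ shrink them to cylinders determined by admissible $k\times k$ blocks $P,Q$. Transitivity of $A_k$ gives a horizontal concatenation of $k$-columns joining the right column-profile of $P$ to the left column-profile of $Q$, and transitivity of $B_k$ gives the analogous vertical connection. Coherence of $A$ and $B$ then allows one to assemble these into a genuine element of $X(A,B)$ realising $\sigma^{(m_1,m_2)}(V)\cap U\neq\emptyset$ for appropriate $(m_1,m_2)$.

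I expect the main technical obstacle to be in the converse direction, since strong mixing requires the intersection to be nonempty for \emph{every} $n\in\mathbb{N}^2$ beyond a threshold, whereas transitivity of $A_k,B_k$ only produces one path length for each pair of states. Closing this gap depends on showing that the relevant sets of admissible path lengths have no gaps beyond some threshold, and this is exactly where the hypothesis that $A_n,B_n$ are not permutation matrices becomes critical: transitivity together with the nonpermutation property, coupled across the two directions via coherence, upgrades ``some $m$'' to ``all sufficiently large $m$'' in both coordinates simultaneously.
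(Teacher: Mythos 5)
Your first reduction is exactly the one the paper makes: combine Theorem~\ref{t1} with the Cuntz--Krieger criterion from the introduction, so that (given the non-permutation hypothesis) everything rests on the equivalence ``all $A_n,B_n$ transitive $\Leftrightarrow$ $(X(A,B),\sigma)$ strong mixing.'' But the paper does not prove that equivalence; its entire proof is the citation ``Apply Proposition 2.2 in \cite{MP2}.'' You instead try to prove the Markley--Paul equivalence from scratch, and both halves of your argument have real gaps.

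In the forward direction, strong mixing only gives $\sigma^{(m,s)}(U_\alpha)\cap U_{\alpha'}\neq\emptyset$ for $(m,s)\ge N$ componentwise, so you cannot take $s=0$: the point $x$ you produce has $\alpha'$ in the strip of rows $0,\dots,n-1$ but $\alpha$ in the strip of rows $s,\dots,s+n-1$ with $s\ge N_2>0$. Restricting $x$ to either strip gives an $A_n$-walk from $\alpha'$ to \emph{some} block, or from \emph{some} block to $\alpha$, but not a walk from $\alpha'$ to $\alpha$; the phrase ``one extracts an $A_n$-walk of length $m$ from $\alpha'$ to $\alpha$'' is not justified, and vertical shift invariance does not repair it. In the converse direction you correctly identify the obstacle (transitivity gives one path length per pair of states, mixing needs all sufficiently large lengths in both coordinates), but your proposed fix is wrong as stated: for a single $0$--$1$ matrix, ``irreducible and not a permutation'' does not imply aperiodicity (e.g.\ an irreducible matrix of period $2$ with a row sum $\ge 2$), so the non-permutation hypothesis alone cannot upgrade ``some $m$'' to ``all large $m$.'' Whatever rescues the argument must come from the interaction of the two directions through the whole family $\{A_n,B_n\}_n$ and coherence --- which is precisely the nontrivial content of Markley--Paul's Proposition 2.2 that you would need to reprove in detail. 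As written, your proof delegates its hardest step to an assertion that is false in the generality in which you invoke it.
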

\begin{proof} Apply Proposition 2.2 in \cite{MP2}.
\end{proof}

\example\label{fs1} For the full shift described in \ref{fs}, we have $A=B=\left[\begin{array}{cc}1&1\\1&1\end{array}\right]=A_1=B_1$ and $A_{n+1}=B_{n+1}=A\otimes A_n$ for  $n\ge 1$. The corresponding $C^*$-algebras are $\bar{\mathcal A}(2,n)\cong{\mathcal A}(n,2)\cong{\mathcal O}_{2^n}$.

\example\label{ex1a} For the shift associated to the textile system in \ref{ex1}, we have 
\[A_1=A=\left[\begin{array}{cc}1&1\\1&1\end{array}\right], \;\; A_2=\left[\begin{array}{cccc}1&1&1&1\\1&1&1&1\\1&1&1&1\\1&1&1&1\end{array}\right], ...\]
\[B_1=B=\left[\begin{array}{cc}0&1\\1&0\end{array}\right], \;\; B_2=\left[\begin{array}{cccc}0&0&0&1\\0&0&1&0\\0&1&0&0\\1&0&0&0\end{array}\right],...\]
with corresponding sequences of $C^*$-algebras $\bar{\mathcal A}(2,n)\cong{\mathcal O}_{2^n}$  and ${\mathcal A}(n,2)\cong C({\mathbb T})\otimes M_{2^n}$, since $A_n$ is the $2^n\times 2^n$ matrix with all entries $1$, and $B_n$ is a $2^n\times 2^n$ permutation matrix. Note that the dynamical system $(X(A,B),\sigma)$ is not strong mixing.

\example\label{gm1} Consider the Golden Mean shift $X(A,A)$ with $A=\left[\begin{array}{cc}1&1\\1&0\end{array}\right]$. Since the number of $n$-words in $Y(A)$ is a Fibonacci number, the dimension $k_n$ of $A_n=B_n$ is also a Fibonacci number, where $k_1=2$ and $k_2=3$. It is easy to see that to get $A_{n+1}$ from $A\otimes A_n$ we have to remove the last $2k_n-k_{n+1}$ rows and columns. Thus
\[A_1=A,\;\; A_2=\left[\begin{array}{ccc}1&1&1\\1&0&1\\1&1&0\end{array}\right],\;\; A_3=\left[\begin{array}{ccccc}1&1&1&1&1\\1&0&1&1&0\\1&1&0&1&1\\1&1&1&0&0\\1&0&1&0&0\end{array}\right],\]\[ A_4=\left[\begin{array}{cccccccc}1&1&1&1&1&1&1&1\\1&0&1&1&0&1&0&1\\1&1&0&1&1&1&1&0\\1&1&1&0&0&1&1&1\\1&0&1&0&0&1&0&1\\1&1&1&1&1&0&0&0\\1&0&1&1&0&0&0&0\\1&1&0&1&1&0&0&0\end{array}\right]\] etc, which are transitive and not permutation matrices.
The sequence of simple purely infinite $C^*$-algebras ${\mathcal A}(2,n)\cong\bar{\mathcal A}(n,2)\cong{\mathcal O}_{A_n}$ encodes the complexity of the Golden Mean shift.

\remark We have natural projections $Y(A_{n+1})\to Y(A_n)$ and $Y(B_{n+1})\to Y(B_n)$ such that
\[X(A,B)=\varprojlim Y(A_n)=\varprojlim Y(B_n).\]
The families of $C^*$-algebras ${\mathcal A}(m,n)$ and $\bar{\mathcal A}(m,n)$ can be thought as  $C^*$-bundles over $\{(m,n)\in{\mathbb N}^2\mid \;\; m,n\ge 2\}$, and we can interpret the corresponding section $C^*$-algebras  as other algebras associated to  the shift $X(A,B)$. In the case $X(A,B)$ is constructed from a rank two graph, the relationship between the graph $C^*$-algebra and the above $C^*$-algebras  remains to be explored.
\medskip

\section{Groupoid morphisms and Fell bundles from textile systems}

\medskip

\definition  A surjective graph morphism $\phi:G\to H$
has the path lifting property for $s$ (or $\phi$ is an $s$-fibration)  if  for all $v\in G^0$ and  for all $b\in H^1$ with $s(b)=w=\phi(v)$ there is  $a\in G^1$ with $s(a)=v$  with $\phi(a)=b$.  Similarly, we define an $r$-fibration. If the morphism $\phi$ has the path lifting property for both $s$ and $r$, we say that $\phi$ is a fibration. The morphism $\phi$ is a covering if it has the {\em unique} path lifting property for both $s$ and $r$.

\begin{remark} The morphisms $p$ and $q$ in the textile systems from examples \ref{ex2} and \ref{ex3} are fibrations. The morphism $p=q$ in example \ref{ex1} is a covering. The canonical morphisms $p$ and $q$ for the full shift (see \ref{fs}) are covering maps, but the full shift does not define a rank two graph, because the unique factorization property fails. Also, note that in this case, the horizontal and vertical shifts are not local homeomorphisms.

In general, the morphisms  $p$ and $q$ in a textile system don't have the path lifting property: let $G^1=\{a,b,c\}, G^0=\{u,v\}, s(a)=r(a)=u, s(b)=r(c)=u, s(c)=r(b)=v, H^1=\{e,f\}, H^0=\{w\}, p(u)=p(v)=q(u)=q(v)=w, p(a)=p(b)=e, p(c)=f, q(a)=e, q(b)=q(c)=f$.

\begin{figure}[htbp] 
   \centering
   \includegraphics[width=4in]{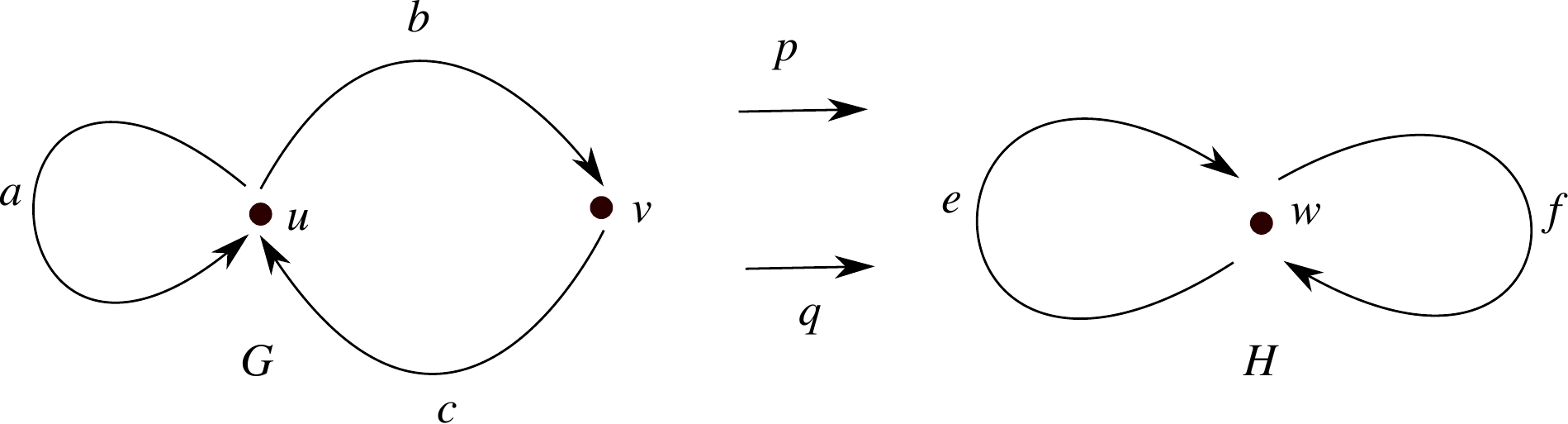} 
   \caption{}
   \label{fig:example}
\end{figure}

Then for $u\in G^0$ and $f\in H^1$ with $s(f)=p(u)=w$ there is no edge $x\in G^1$ with $s(x)=u$ and $p(x)=f$. Also, for $v\in G^0$ and $e\in H^1$ with $s(e)=w=q(v)$ there is no $x\in G^1$ with $s(x)=v$ and $q(x)=e$.  Note also that the graph $\bar{G}=(G^1,H^1)$ from the dual textile system has sinks.
\end{remark}

\begin{proposition} Consider any rank two graph of the form $G_1*_{\theta}G_2$ with the corresponding textile system described in Proposition \ref{gtot}. Then the morphism $q$ has the unique path lifting property for $s$, and the morphism $p$ has the unique path lifting property for $r$.
\end{proposition}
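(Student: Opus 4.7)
The plan is to reduce both claims to direct unwinding of the definitions from Proposition \ref{gtot}, and then observe that in each case the unique candidate lift is forced either by the formulas for $(s_G, r_G, p, q)$ or by the bijectivity of $\theta$.

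First I would pin down the action of $p$ and $q$ on vertices: using the commutativity of the textile diagram together with the fibered-product condition $r_{H_1}(\alpha) = s_{H_2}(\beta)$ and the $\theta$-compatibility $r_{G_1}(\alpha) = r_{G_2}(\beta')$, one obtains $q(\alpha) = s_{G_1}(\alpha)$ and $p(\alpha) = r_{G_1}(\alpha)$, both depending only on $\alpha \in G^0 = H_1^1$. This allows me to translate the two compatibility hypotheses $s_H(b) = q(v)$ and $r_H(b) = p(v)$ into source/range conditions purely inside $G_1$ and $G_2$.

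For the $s$-lifting property of $q$: given $\alpha \in G^0$ and $\beta \in H^1$ with $s_H(\beta) = q(\alpha)$, any lift $(\alpha_0,\beta_0) \in G^1$ must satisfy $\alpha_0 = s_G(\alpha_0,\beta_0) = \alpha$ and $\beta_0 = q(\alpha_0,\beta_0) = \beta$, so $(\alpha,\beta)$ is the only candidate, and it belongs to $G^1 = H_1^1 * H_2^1$ precisely by the compatibility hypothesis. For the $r$-lifting property of $p$: given $\alpha' \in G^0$ and $\beta' \in H^1$ with $r_H(\beta') = p(\alpha')$, any lift $(\alpha_0,\beta_0) \in G^1$ with $\theta(\alpha_0,\beta_0) = (\beta_0',\alpha_0')$ is forced by $r_G(\alpha_0,\beta_0) = \alpha_0'$ and $p(\alpha_0,\beta_0) = \beta_0'$ to satisfy $\theta(\alpha_0,\beta_0) = (\beta',\alpha')$. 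Hence the only candidate is $\theta^{-1}(\beta',\alpha')$, and existence and uniqueness follow from $\theta$ being a bijection $H_1^1 * H_2^1 \to H_2^1 * H_1^1$, once I check that the hypothesis $r_H(\beta') = p(\alpha')$ is exactly the condition placing $(\beta',\alpha')$ in the codomain of $\theta$.

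The main obstacle is purely notational --- keeping straight which source and range map is being used in each $G_i$ versus its opposite $H_i = G_i^{op}$, and verifying the small compatibility conditions at each step. There is no genuine algebraic content beyond the bijectivity of $\theta$; the proof is essentially a definition chase once the dictionary between $G_i$ and $H_i$ is in place.
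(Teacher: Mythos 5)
Your proposal is correct and follows essentially the same route as the paper: the candidate lift for $q$ is forced to be the pair $(\alpha,\beta)$ itself, which lies in $G^1=H_1^1*H_2^1$ exactly because of the source-compatibility hypothesis, and the lift for $p$ is $\theta^{-1}(\beta',\alpha')$, with existence and uniqueness supplied by the bijectivity of $\theta$. The paper's proof is just a terser version of this same definition chase (dismissing the $p$ case as ``similar''), so your write-up adds detail but no new idea.
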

\begin{proof} Indeed, given $\alpha\in G^0=H_1^1$ and $\beta\in H^1=H_2^1$ with $q(\alpha)=s(\beta)$, there is a unique $(\alpha,\beta)\in G^1=H_1^1*H_2^1$ such that $s(\alpha, \beta)=\alpha$ and $q(\alpha,\beta)=\beta$. The proof for $p$ is similar.
\end{proof}

\begin{remark} For the textile system $T(2,2)$ associated to a two-dimensional shift, we can characterize the (unique) path lifting property  for the morphisms $p,q$ in terms of filling a corner of a $2\times 2$ block. For example, $p$ has the (unique) path lifting property for $s$ if for any admissible column
$\displaystyle\begin{array}{c}a\\c\end{array}$ and for any admissible row $c\;\; d$, there is a (unique) $b$ which completes the admissible block $\displaystyle \beta=\begin{array}{cc}a&b\\c&d\end{array}.$ Similarly, we can characterize the path lifting property for the morphisms $p,q$ in $T(m,n)$. 
\end{remark}

\medskip
For a topological groupoid $\Gamma$, we denote by $s$ and $r$ the source and the range maps, by $\Gamma^0$ the unit space, and by $\Gamma^2$ the set of composable pairs.
\definition 
Let $\Gamma, \Lambda$  be  topological groupoids. 
A {\em groupoid morphism} $\pi :\Gamma\rightarrow \Lambda$ is a
continuous map which
intertwines both the range and source maps and which satisfies
\[\pi(\gamma_1\gamma_2)=\pi(\gamma_1)\pi(\gamma_2)\;\; \text{for all}\; \;(\gamma_1,\gamma_2)\in \Gamma^2.\]  It follows that \[\ker\pi:=\{\gamma\in \Gamma\mid \pi(\gamma)\in \Lambda^0\}\] contains the unit space $\Gamma^0$. A {\em groupoid fibration} is a surjective open morphism
 $\pi :\Gamma\rightarrow \Lambda$ such
that for any $\lambda\in \Lambda$ and $x\in \Gamma^0$ with $\pi(x)=s(\lambda)$ there is $\gamma\in
\Gamma$ with $s(\gamma)=x$ and $\pi(\gamma)=\lambda$. Note that, using inverses, a groupoid fibration also has the property that for any $\lambda\in \Lambda$ and $x\in \Gamma^0$ with $\pi(x)=r(\lambda)$ there is $\gamma\in
\Gamma$ with $r(\gamma)=x$ and $\pi(\gamma)=\lambda$. If $\gamma$ is unique, then $\pi$ is called a {\em groupoid covering}.

For $G$ a finite graph without sinks, let  $G^\infty$ be the space of infinite paths, and let $\sigma:G^\infty\to G^\infty$ be the unilateral shift $\sigma(x_1x_2x_3\cdots)=x_2x_3\cdots$. Let
\[\Gamma(G)=\{(x,m-n,x')\in G^{\infty}\times {\mathbb  Z}\times G^{\infty}\; \mid \;  \sigma^m(x)=\sigma^n(x')\}\]
be the corresponding \' etale groupoid with unit space $\Gamma(G)^0=\{(x,0,x)\mid x\in G^{\infty}\}$ identified with  $G^{\infty}$.

\proposition  Let $G, H$ be finite graphs with no sinks. Then any   morphism  $\phi:G\to H$  with the path lifting property for $s$  induces a
surjective continuous open map 
\[\varphi: G^{\infty}\rightarrow H^{\infty}, \quad\varphi(x_1x_2x_3
\cdots)=\phi(x_1)\phi(x_2)\phi(x_3)\cdots\] 
 and  a groupoid fibration \[\pi: \Gamma(G)\rightarrow \Gamma(H), 
\quad\text{given by}\quad
\pi(x,k,x')=(\varphi(x),k,\varphi(x'))\] 
 with kernel 
$\Delta=\{(x,0,x')\in \Gamma(G)\;\mid\; \varphi(x)=\varphi(x')\}.$ If $\phi$ is a graph covering, then $\pi$ is a groupoid covering.

\begin{proof}  Let $y_1y_2\cdots \in H^{\infty}$ beginning
 at $w_1\in H^0$. Since $\phi$ is onto, there is $v_1\in G^0$ with $\phi(v_1)=w_1$. By the path lifting property, there is $x_1\in G^1$ with $\phi(x_1)=y_1$. Continuing inductively, there is $x_1x_2\cdots\in G^{\infty}$ such that 
 $\varphi(x_1x_2\cdots)=y_1y_2\cdots$, and therefore $\varphi$ is onto.
 Consider a cylinder set  \[Z=\{a_1\cdots a_nx_1x_2\cdots \in G^{\infty}\;\mid x_1x_2\cdots\in G^{\infty}\}.\]  By the path lifting property, $\varphi(Z)$ is the cylinder set in $H^{\infty}$ determined by the finite path $\phi(a_1)\cdots\phi(a_n)$. Hence $\varphi:G^{\infty}\to H^{\infty}$ is continuous and open. 
 We have \[\pi((x,k,x')(x',l,x''))=\pi(x,k,x')\pi(x',l,x'')=(\varphi(x),k+l,\varphi(x'')),\] and $\pi$  is a groupoid morphism.

 Since $\varphi$ is surjective and takes cylinder sets into cylinder sets,  $\pi$ is surjective, continuous and open. 
 To show that $\pi$ is a  fibration, consider $\lambda=(y,k,y')\in \Gamma(H)$ and $x'\in \Gamma(G)^0=G^{\infty}$ with $\varphi(x')=s(\lambda)=y'$. Since $\varphi$ is onto and intertwines   the shift maps, we can find $\gamma=(x,k,x')\in \Gamma(G)$ with $\pi(\gamma)=\lambda$. Hence $\pi$ is a groupoid fibration. In the case $\phi$ is a covering, let's show how we can find $\gamma$ in a  unique way. We have $k=m-n$ and $\sigma^my=\sigma^ny', \sigma^mx=\sigma^nx'$. For $y_m$ and $v=s(x_{m+1})=s(x'_{n+1})$ with $r(y_m)=\phi(v)$ there is a unique $x_m$ with $r(x_m)=v$ and $\phi(x_m)=y_m$. We can continue inductively to find a unique $x$ with $\varphi(x)=y$, and it follows that $\pi$ is a groupoid covering.
Now $(x,k,x')\in \ker\pi$ iff $\varphi(x)=\varphi(x')$ and $k=0$. \end{proof}

\corollary Given a textile system $(G,H,p,q)$ such that $G, H$ have no sinks and $p, q$ have the path lifting property, we get two groupoid fibrations $\pi, \rho :\Gamma(G)\to \Gamma(H)$. If $p$ and $q$ are coverings, we get two groupoid coverings $\pi, \rho :\Gamma(G)\to \Gamma(H)$.

\example\label{fs2} Consider the coverings $p=q:G\to H$ in the textile system of the full shift as in Example \ref{fs}.  We obtain a  covering $\pi=\rho:\Gamma(G)\to\Gamma(H)$ of Cuntz groupoids. 

\medskip

Recall that a (saturated) Fell bundle over a groupoid $\Gamma$ is a Banach bundle
  $\pi: E \to \Gamma$ with extra structure such that the fiber $E_{\gamma}=\pi^{-1}(\gamma)$ is an $E_{r(\gamma)}$--$E_{s(\gamma)}$
imprimitivity bimodule for all $\gamma\in\Gamma$. The restriction of $E$ to the unit space $\Gamma^0$ is a
$C^*$-bundle.
The $C^*$-algebra $C^*_r(\Gamma; E)$ is a completion of $C_c(\Gamma; E)$ in ${\mathcal L}(L^2(\Gamma; E))$. For more details, see \cite{DKR}, where
 the following result is proved.

\begin{theorem} Given an open surjective morphism of \'etale  groupoids
$\pi:\Gamma\rightarrow \Lambda$ with amenable kernel $\Delta := \pi^{-1}(\Lambda^0)$, there is
 a Fell bundle $E=E(\pi)$ over $\Lambda$ such that
$C^*_r(\Gamma)\cong C^*_r(E)$. 
\end{theorem}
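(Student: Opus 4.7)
The plan is to realize $E$ as a pushforward of $\Gamma$ along $\pi$: for each $\lambda\in\Lambda$ the fiber $E_\lambda$ will be an $L^2$-type completion of $C_c(\pi^{-1}(\lambda))$, so that compactly supported continuous sections of $E$ correspond to functions in $C_c(\Gamma)$ via restriction to the preimages $\pi^{-1}(\lambda)$. First I would record that $\Delta=\pi^{-1}(\Lambda^0)$ is a clopen subgroupoid of $\Gamma$ with $\Delta^0=\Gamma^0$, and for each $u\in\Lambda^0$ write $\Delta_u$ for the restriction of $\Delta$ to the subset $\pi^{-1}(u)\subset\Gamma^0$. Because $\pi$ intertwines source and range, each preimage $\pi^{-1}(\lambda)$ carries commuting left and right actions of $\Delta_{r(\lambda)}$ and $\Delta_{s(\lambda)}$.

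Next I would set $E_u:=C^*_r(\Delta_u)$ for $u\in\Lambda^0$, and for arbitrary $\lambda\in\Lambda$ complete $C_c(\pi^{-1}(\lambda))$ with respect to the $C_c(\Delta_{r(\lambda)})$- and $C_c(\Delta_{s(\lambda)})$-valued inner products coming from groupoid convolution. Amenability of $\Delta$ ensures that these inner products extend continuously to the reduced $C^*$-algebras, making each $E_\lambda$ a full $E_{r(\lambda)}$--$E_{s(\lambda)}$ imprimitivity bimodule. The multiplication $E_{\lambda_1}\cdot E_{\lambda_2}\to E_{\lambda_1\lambda_2}$ and the involution $E_\lambda\to E_{\lambda^{-1}}$ are inherited from the groupoid operations in $\Gamma$, and openness of $\pi$ lets one push open bisections of $\Gamma$ forward to bisections of $\Lambda$ over which $E$ is locally trivial, so these structures assemble into a genuine Banach, hence Fell, bundle.

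Finally, I would define the isomorphism $\Psi:C^*_r(\Gamma)\to C^*_r(E)$ on the dense subalgebra $C_c(\Gamma)$ by $\Psi(f)(\lambda):=f|_{\pi^{-1}(\lambda)}$. Openness of $\pi$ makes $\Psi(f)$ a compactly supported continuous section, and a routine convolution computation shows $\Psi$ respects multiplication and involution and has dense range. The main obstacle is proving that $\Psi$ is isometric for the reduced norms: one must compare the regular representation of $C^*_r(\Gamma)$ on $\ell^2(\Gamma^u)$ for $u\in\Gamma^0$ with the representation of $C^*_r(E)$ on sections over the $\Lambda$-orbit of $\pi(u)$. This is exactly where amenability of $\Delta$ enters, as it identifies $\ell^2(\Gamma^u)$ with a Rieffel-induced module from an $\ell^2$-space over $\Delta$ and thereby forces the two reduced norms to agree. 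The interplay between openness of $\pi$ (for the bundle topology) and amenability of $\Delta$ (for the norms) is the technical heart of the argument.
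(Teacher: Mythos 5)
The paper gives no proof of this theorem at all---it is quoted from \cite{DKR} with the remark ``where the following result is proved''---and your sketch follows essentially the construction of that reference: fibers $E_\lambda$ obtained by completing $C_c(\pi^{-1}(\lambda))$ as imprimitivity bimodules over the reduced $C^*$-algebras $C^*_r(\pi^{-1}(u))$ of the fibers of the kernel, with amenability of $\Delta$ invoked precisely to identify the reduced norms on $C^*_r(\Gamma)$ and $C^*_r(E)$. The one imprecision is your appeal to local triviality: a Fell bundle over a groupoid need only be an (upper semi)continuous Banach bundle, and the topology on the total space is determined by declaring the restrictions $f|_{\pi^{-1}(\lambda)}$, $f\in C_c(\Gamma)$, to be continuous sections via Fell's criterion, not by trivializing $E$ over bisections of $\Lambda$.
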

Using Corollary \ref{ds} and  Theorem \ref{t1}, we get 

\begin{theorem}\label{t2} Given a matrix shift $X(A,B)$ such that in the associated family of textile systems $T(m,n)$ the morphisms $p$ and $q$ have the path lifting property, there are two families of Fell bundles $E^{(m,n)}(p)$ and $E^{(m,n)}(q)$ over $\Gamma(G(m-1,n))$ such that
\[{\mathcal A}(m,n)\cong C^*_r(E^{(m,n)}(p))\cong C^*_r(E^{(m,n)}(q)).\]
\end{theorem}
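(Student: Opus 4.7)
The plan is to assemble the groupoid fibration picture developed in the previous section with the Fell bundle construction of \cite{DKR} cited just above. Fix $m,n\ge 2$. By hypothesis the surjective graph morphisms $p,q:G(m,n)\to G(m-1,n)$ in the textile system $T(m,n)$ have the path lifting property, so the proposition on groupoid fibrations applies and produces two étale groupoid fibrations
\[\pi,\rho:\Gamma(G(m,n))\longrightarrow\Gamma(G(m-1,n))\]
that are surjective, continuous and open, with kernels
\[\Delta_\pi=\{(x,0,x')\mid\varphi_p(x)=\varphi_p(x')\},\qquad \Delta_\rho=\{(x,0,x')\mid\varphi_q(x)=\varphi_q(x')\},\]
where $\varphi_p,\varphi_q:G(m,n)^\infty\to G(m-1,n)^\infty$ are the maps on infinite paths induced by $p$ and $q$. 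This puts us exactly in the setting required by the theorem from \cite{DKR}, provided we verify that the kernels are amenable.

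For the amenability step, I would use the fact that the path groupoid $\Gamma(G(m,n))$ of a finite directed graph without sinks is amenable; this is standard and it is the reason that $C^*_r(\Gamma(G(m,n)))$ coincides with the full groupoid $C^*$-algebra and with the graph $C^*$-algebra ${\mathcal A}(m,n)=C^*(G(m,n))$. Each of $\Delta_\pi$ and $\Delta_\rho$ is a closed étale subgroupoid of $\Gamma(G(m,n))$, since it lives in the slice $\{k=0\}$ of the cocycle and is cut out by the continuous condition that two infinite paths have a common image under $\varphi_p$ (respectively $\varphi_q$). Closed subgroupoids of amenable étale groupoids are amenable, so $\Delta_\pi$ and $\Delta_\rho$ inherit amenability from $\Gamma(G(m,n))$.

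With the hypotheses of the \cite{DKR} theorem now satisfied, I would apply it to $\pi$ and to $\rho$ separately, producing Fell bundles $E^{(m,n)}(p):=E(\pi)$ and $E^{(m,n)}(q):=E(\rho)$ over $\Gamma(G(m-1,n))$ together with isomorphisms
\[C^*_r(\Gamma(G(m,n)))\cong C^*_r(E^{(m,n)}(p)),\qquad C^*_r(\Gamma(G(m,n)))\cong C^*_r(E^{(m,n)}(q)).\]
Combining these with the identification ${\mathcal A}(m,n)\cong C^*_r(\Gamma(G(m,n)))$ recalled above yields the stated chain of isomorphisms. The step I expect to be the main obstacle is the amenability verification of $\Delta_\pi$ and $\Delta_\rho$: although it reduces to standard permanence properties, one has to be slightly careful because a priori the fibers of $\varphi_p$ and $\varphi_q$ need not be finite, so the kernels are genuine étale equivalence relations on a Cantor-like space whose amenability is obtained by realizing them as closed subgroupoids of the ambient (amenable) graph groupoid rather than by any direct calculation.
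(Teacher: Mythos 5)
Your proposal is correct and follows essentially the same route the paper intends: the paper derives the theorem directly by applying the cited result of \cite{DKR} to the groupoid fibrations $\pi,\rho:\Gamma(G(m,n))\to\Gamma(G(m-1,n))$ induced by $p$ and $q$ via the earlier proposition, combined with the identification ${\mathcal A}(m,n)\cong C^*_r(\Gamma(G(m,n)))$. The only difference is that you explicitly verify the amenability of the kernels (as closed subgroupoids of the amenable graph groupoid, sitting in the $k=0$ slice), a hypothesis of the \cite{DKR} theorem that the paper leaves unaddressed; this is a worthwhile addition rather than a divergence.
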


\example\label{gr}

Consider the textile system $(G,H,p,q)$ from example \ref{ex1}.
In this case
 $\Gamma(G)^0$ has  two points, and $C^*(G)\cong  C({\mathbb T})\otimes M_2$. The maps $p$ and $q$ are coverings, they both induce the morphism \[\pi:\Gamma(G)\to \Gamma(H)\cong{\mathbb Z}, \;\pi(x,k,x')=k,\] and the two Fell bundles $E^{(2,2)}(p)$ and $E^{(2,2)}(q)$ over  ${\mathbb Z}$ coincide. The fiber over $0\in {\mathbb Z}$ is isomorphic to $M_2$. 
 
 \begin{remark} For a rank two graph $G_1*_{\theta}G_2$, since the map $q$ in the corresponding textile system is a covering, we get a goupoid covering $\pi:\Gamma(G)\to\Gamma(H)$. Recall that $G^1=H_1^1*H_2^1, G^0=H_1^1$ and $H=H_2$, where $H_i=G_i^{op}$. In particular, $\Gamma(H)$ acts on $\Gamma(G)^0$ and $\Gamma(G)\cong \Gamma(H)\ltimes \Gamma(G)^0$ (see \cite{DKR} Proposition 5.3).
 \end{remark}

\example  
Consider the textile system from example \ref{ex2}.
Here $G^{\infty}=\{a,b,c\}^{\mathbb N}, H^{\infty}=\{e,f\}^{\mathbb N}$ are Cantor sets, $C^*(G)\cong  {\mathcal O}_3$ and $C^*(H)\cong{\mathcal O}_2$, the Cuntz algebras. The morphisms $p$ and $q$ are fibrations and induce different groupoid morphisms $\pi, \rho:\Gamma(G)\to \Gamma(H)$. The fibers of the Fell bundle $E^{(2,2)}(p)$ over $y\in \Gamma(H)^0=H^{\infty}$
 are isomorphic to  $M_{2^n}$, where $n$
is the number of $e's$ in $y$. For $n=\infty$, $M_{2^{\infty}}$ is the
UHF-algebra of type $2^{\infty}$.

\example Let $(G,H,p,q)$ be the textile system from example \ref{ex3}. 
The space $G^{\infty}\subset \{u,v\}^{\mathbb N}$ is defined by the vertex matrix
\[A=\left[\begin{array}{cc}1&1\\1&0\end{array}\right]\]
and $C^*(G)\cong {\mathcal O}_A$. The space $H^{\infty}$ has one point, and $\Gamma(H)\cong {\mathbb Z}$. Both $p$ and $q$ induce the same morphism $\pi:\Gamma(G)\to {\mathbb Z}, \;\pi(x,k,x')=k$ as in  Example \ref{gr}, and the  Fell bundle $E^{(2,2)}(p)=E^{(2,2)}(q)$  corresponds to the grading of ${\mathcal O}_A$.

\example The full shift $X= \{0,1\}^{{\mathbb N}^2}$ determines   a sequence of textile systems $T(n,2)=\bar{T}(2,n)$, where $G(n,2)=\bar{G}(2,n)$ is the complete graph with $2^n$ vertices, and $G(n-1,2)=\bar{G}(2,n-1)$ is the complete graph with $2^{n-1}$ vertices. The two families of Fell bundles over the Cuntz groupoid $\Gamma(G(n-1,2))$ have C*-algebras  isomorphic to $C^*(G(n,2))\cong {\mathcal O}_{2^n}$. 

\example For the Golden Mean shift with transition matrices
\[A=B= \left[\begin{array}{cc}1&1\\1&0\end{array}\right],\]
the corresponding graphs $G(n,2)=\bar{G}(2,n)$ have vertex matrices as in Example \ref{gm1}.

The morphisms $p$ and $q$ in $T(2,n)=\bar{T}(n,2)$ are fibrations and determine different groupoid morphisms $\Gamma(G(2,n))\to \Gamma(G(2,n-1))$ and two Fell bundles $E^{(2,n)}(p)$ and $E^{(2,n)}(q)$ over $\Gamma(G(2,n-1))$.
It would be interesting to calculate the fibers of these Fell bundles.

\medskip

\end{document}